\documentclass[psamsfonts]{amsart}

\usepackage{amscd, amsmath, amsfonts, amssymb, latexsym}
\usepackage[mathscr]{eucal}
\usepackage{color}
\usepackage{hyperref}
\copyrightinfo{2006}{American Mathematical Society}

\newtheorem{theorem}{Theorem}[section]

\newtheorem{lemma}{Lemma}[section]

\newcommand{\Lap}{\mathcal{L}}

\numberwithin{equation}{section}

\begin{document}

\title[Infinitely divisible modified Bessel distribution]{On an infinitely divisible distribution involving modified Bessel functions}

\author[\'A. Baricz]{\'Arp\'ad Baricz}
\address{Department of Economics, Babe\c{s}-Bolyai University, 400591 Cluj-Napoca, Romania and Institute of Applied Mathematics, \'Obuda University, 1034 Budapest, Hungary}
\email{bariczocsi@yahoo.com}

\author[D.K. Prabhu]{Dhivya Prabhu K}
\address{Institute of Applied Mathematics, \'Obuda University, 1034 Budapest, Hungary}
\email{dhivyanslm@gmail.com}

\subjclass[2010]{Primary: 33C10, Secondary: 60E07.}
\keywords{Modified Bessel functions of the first kind; zeros of Bessel functions of the first kind; infinite divisibility; complete monotonicity; Bernstein theorem; Laplace transform; Stieltjes function.}

\begin{abstract}
Mourad E.H. Ismail and Kenneth S. Miller in 1982 conjectured that for $\nu>\mu\geq0$ and $b>a>0$ the function
$$x\mapsto G(x;\mu,\nu)=\left(\frac ba\right)^{\mu-\nu}\frac{I_\mu(a\sqrt{x})I_\nu(b\sqrt{x})}{I_\mu(b\sqrt{x})I_\nu(a\sqrt{x})},$$
where $I_\nu$ stands for the modified Bessel function of the first kind, is the Laplace transform of an infinitely
divisible probability distribution. Their conjecture is equivalent to the monotonicity property of some exponential sums over the zeros $j_{\nu,n}$ with respect to the order, where
$j_{\nu,n}$ denotes the $n$th positive zero of the Bessel function of the first kind of order $\nu$. In this paper our aim is to reduce the problem further to the monotonicity, with respect to $\nu$, of
\[Q_\nu(t)=\sum_{n\geq1}j_{\nu,n}^2e^{-t j_{\nu,n}^2},\qquad t>0,\]
and to prove this monotonicity. In this way we show that indeed the above function $x\mapsto G(x;\mu,\nu)$ is the Laplace transform of an infinitely
divisible probability distribution, and the above conjecture is true. The key ingredient in our proof is an integral representation for the derivative of a quotient of modified Bessel functions of the first kind with respect to the order, combined with Bernstein's theorem and a probability tail argument.
\end{abstract}

\maketitle

\section{\bf The conjecture and its equivalent form}

Let $I_\nu$ denote the modified Bessel function of the first kind.  In
\cite[p. 234]{IsmailMiller1982}, Mourad E.H. Ismail and Kenneth S. Miller considered, for $\nu>\mu\geq0$ and $b>a>0,$ the function
\[G(x;\mu,\nu)=\left(\frac ba\right)^{\mu-\nu}\frac{I_\mu(a\sqrt{x})I_\nu(b\sqrt{x})}{I_\mu(b\sqrt{x})I_\nu(a\sqrt{x})},\qquad x\geq0,\]
and conjectured that $x\mapsto G(x;\mu,\nu)$ is the Laplace transform of an infinitely divisible probability distribution on $[0,\infty)$. In the recent paper \cite{BariczEtAl2026} the above quotient has been revisited and the authors investigated the infinite divisibility after multiplying $G(x;\mu,\nu)$ by an additional factor. In one of the results $G(x;\mu,\nu)$ was multiplied by a normalized reciprocal $I_\sigma$ factor, and in another, for $\nu>1/2$ the expression $G(x;\mu,\nu)$ was multiplied by $e^{-b\sqrt{x}}.$ The proofs in \cite{BariczEtAl2026} show that after the extra factor is introduced, the resulting zero sums split into individually positive pieces using monotonicity of $j_{\nu,n}$ or can be dominated by a continuum integral using zero-spacing inequalities.

Now, we first recall the equivalent formulation of the above conjecture in terms of the positive zeros of the Bessel function of the first kind $J_\nu$. For this, we write
$0<j_{\nu,1}<j_{\nu,2}<\ldots$ for the positive zeros of $J_\nu$ and use the notation $\lambda_n(\nu)=j_{\nu,n}^2.$ Moreover, for $t>0$, we define the infinite sum
\[H_\nu(t)=\sum_{n\geq1} e^{-t\lambda_n(\nu)}\]
and for $\beta>\alpha>0$, we consider the following expression
\[\Theta_\nu(t;\alpha,\beta)=H_\nu(\alpha t)-H_\nu(\beta t)=\sum_{n\geq1}\left(e^{-\alpha t\lambda_n(\nu)}-e^{-\beta t\lambda_n(\nu)}\right).\]
Note that the infinite product representation
\[I_\mu(z)=\frac{(z/2)^\mu}{\Gamma(\mu+1)}\prod_{n\geq1}\left(1+\frac{z^2}{j_{\mu,n}^2}\right), \qquad \mu\geq0,\]
clearly implies that for $\Phi(x)=-\log G(x;\mu,\nu)$ we have the identity
\[\Phi'(x)=\sum_{n\geq1}\left(\frac{1}{x+j_{\mu,n}^2/b^2}-\frac{1}{x+j_{\mu,n}^2/a^2}\right)-
\sum_{n\geq1}\left(\frac{1}{x+j_{\nu,n}^2/b^2}-\frac{1}{x+j_{\nu,n}^2/a^2}\right)\]
and in view of
\[\frac{1}{x+c}=\int_0^\infty e^{-xt}e^{-ct}{\rm d}t,\]
we arrive at
\[\Phi'(x)=\int_0^\infty e^{-xt}\left(\Theta_\mu(t;b^{-2},a^{-2})-\Theta_\nu(t;b^{-2},a^{-2})\right){\rm d}t.\]
Since $G(0;\mu,\nu)=1$, the Schoenberg characterization of infinite divisibility says
that $G(x;\mu,\nu)$ is the Laplace transform of an infinitely divisible probability distribution if and only if $\Phi'$ is completely monotonic or equivalently,
$\Phi$ is a Bernstein function vanishing at the origin \cite[Theorem 1]{IsmailMiller1982}. By Bernstein's theorem and
uniqueness of the Laplace transform, the conjecture is therefore equivalent
to
\[\Theta_\mu(t;\alpha,\beta)\geq \Theta_\nu(t;\alpha,\beta)\]
for all $\nu>\mu\geq0,$ $\beta>\alpha>0$ and $t>0.$ This is the equivalent formulation already mentioned by Mourad E.H. Ismail and Kenneth S. Miller in \cite[Section 4]{IsmailMiller1982}.

Now, we are going to remove the two auxiliary parameters $\alpha$ and $\beta$. For this we define
\[Q_\nu(t)=-H_\nu'(t)=\sum_{n\geq1}\lambda_n(\nu)e^{-t\lambda_n(\nu)}, \qquad t>0.\]
We note that for fixed $\nu$ the termwise differentiation is justified locally uniformly on $(0,\infty).$ By using the McMahon asymptotic expansion for large zeros \cite[eq. 10.21.19]{nist} we have that $j_{\nu,n}=\pi n+\mathcal{O}(1)$ and $j_{\nu,n}^2=\pi^2 n^2+\mathcal{O}(n),$ as $n\to\infty.$ Thus, there exist the constants $c_1,c_2>0$ such that $c_1n^2\leq \lambda_n(\nu)\leq c_2n^2$ and therefore if $t$ is in a compact interval $[\delta,T]\subset(0,\infty),$ then we have that
$$\lambda_n(\nu)e^{-t\lambda_n(\nu)}\leq c_2n^2 e^{-c_1\delta n^2},$$
and the majorant is summable. This in turn implies that the differentiated series converges uniformly by the Weierstrass test.

On the other hand we have
\[\Theta_\nu(t;\alpha,\beta)=\int_{\alpha t}^{\beta t}Q_\nu(s){\rm d}s,\]
and from this it follows immediately that $Q_\mu(s)\geq Q_\nu(s),$ where $s>0,$ implies that $\Theta_\mu(t;\alpha,\beta)\geq\Theta_\nu(t;\alpha,\beta).$ Conversely, suppose that the latter inequality holds for all $\beta>\alpha>0$ and $t>0.$ For arbitrary $v>u>0$ we choose $t=1,$ $\alpha=u$ and $\beta=v.$ Thus we have
\[ \int_u^v\left(Q_\mu(s)-Q_\nu(s)\right){\rm d}s\geq0\]
for every interval $v>u>0.$ Hence the integral of $Q_\mu-Q_\nu$ is nonnegative over every compact interval contained in $(0,\infty).$ Since $Q_\mu-Q_\nu$ is continuous, it follows that $Q_\mu(s)-Q_\nu(s)\geq0$ for every $s>0$. Indeed, if the difference were negative at some point, continuity would make it negative on a sufficiently small interval around that point, contradicting the preceding integral inequality.

Thus, the Ismail--Miller conjecture is equivalent to the following theorem.

\begin{theorem}\label{thm:mainQ}
For every $t>0$, the function
\[\nu\mapsto Q_\nu(t)=\sum_{n\geq1} j_{\nu,n}^2e^{-t j_{\nu,n}^2}\]
is decreasing on $[0,\infty)$.
\end{theorem}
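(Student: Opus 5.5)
The plan is to work with the derivative in $\nu$ and to reduce the sign question, via Bernstein's theorem, to a complete monotonicity statement for an explicit Bessel quotient. For fixed $t>0$ the series for $Q_\nu(t)$ may be differentiated termwise in $\nu$. This uses the bound $c_1n^2\le\lambda_n(\nu)\le c_2n^2$ from McMahon's expansion, locally uniformly in $\nu$, together with the classical fact that $\nu\mapsto j_{\nu,n}$ is differentiable with $\partial_\nu j_{\nu,n}=\mathcal{O}(1)$; Watson's formula $\partial_\nu j_{\nu,n}=2j_{\nu,n}\int_0^\infty K_0(2j_{\nu,n}\sinh s)e^{-2\nu s}{\rm d}s>0$ gives this. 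We get
\[
\partial_\nu Q_\nu(t)=-\sum_{n\ge1}\lambda_n'(\nu)\,\bigl(t\lambda_n(\nu)-1\bigr)e^{-t\lambda_n(\nu)},\qquad \lambda_n'(\nu)=\partial_\nu\lambda_n(\nu)>0,
\]
and the theorem amounts to $g_\nu(t):=-t\,\partial_\nu Q_\nu(t)\ge0$ for all $t>0$ and $\nu\ge0$. For $t\ge1/\lambda_1(\nu)$ every term is nonnegative, because the $\lambda_n$ increase in $n$, so this range is immediate. The real content is the range of small $t$, where the first few terms have the wrong sign.

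To handle all $t$ at once I would pass to Laplace transforms. Put $f_\nu(x)=\sum_{n\ge1}(x+\lambda_n(\nu))^{-1}$. The product formula for $I_\nu$ recalled above gives the Mittag-Leffler form
\[
f_\nu(x)=\frac{I_{\nu+1}(\sqrt x)}{2\sqrt x\,I_\nu(\sqrt x)}.
\]
From $\int_0^\infty te^{-xt}e^{-\lambda t}{\rm d}t=(x+\lambda)^{-2}$ we obtain
\[
\int_0^\infty e^{-xt}\,tQ_\nu(t)\,{\rm d}t=\sum_{n\ge1}\frac{\lambda_n(\nu)}{(x+\lambda_n(\nu))^2}=\frac{{\rm d}}{{\rm d}x}\bigl(xf_\nu(x)\bigr)=\frac{{\rm d}}{{\rm d}x}\left(\frac{\sqrt x\,I_{\nu+1}(\sqrt x)}{2I_\nu(\sqrt x)}\right).
\]
Differentiating in $\nu$ under the integral (dominated convergence, by the same majorants) shows that the Laplace transform of $g_\nu$ is
\[
L_\nu(x)=-\partial_\nu\frac{{\rm d}}{{\rm d}x}\left(\frac{\sqrt x\,I_{\nu+1}(\sqrt x)}{2I_\nu(\sqrt x)}\right).
\]
By Bernstein's theorem and uniqueness of the Laplace transform, $g_\nu\ge0$ is equivalent to complete monotonicity of $x\mapsto L_\nu(x)$ on $(0,\infty)$. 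This is the reformulation I would then attack.

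The key step, and the main obstacle, is an integral representation of $\partial_\nu$ of the quotient $r_\nu(z)=zI_{\nu+1}(z)/I_\nu(z)$ that exhibits $x\mapsto-\partial_\nu\,\frac{{\rm d}}{{\rm d}x}r_\nu(\sqrt x)$ as a mixture of completely monotone kernels such as $(x+c)^{-2}$ or $e^{-sx}$ with a nonnegative measure. My first attempt would start from the Riccati equation $r_\nu'(z)=z-\frac{2\nu+1}{z}r_\nu+\frac{r_\nu^2}{z}$ (up to normalization). I would differentiate it in $\nu$, solve the resulting linear ODE for $\partial_\nu r_\nu$ by variation of constants with integrating factor $z^{2\nu+1}I_\nu(z)^{-2}$, and arrive at a formula of the type
\[
\partial_\nu r_\nu(z)=-\frac{2}{z^{2\nu}I_\nu(z)^2}\int_0^z s^{2\nu+1}I_\nu(s)^2\,\frac{r_\nu(s)}{s}\,{\rm d}s.
\]
The sign of this is clear, but complete monotonicity in $x=z^2$ of its $x$-derivative is not. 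Failing a direct Stieltjes-type representation, I would fall back on the zero-sum side. Inserting Watson's formula for $\lambda_n'$ turns $g_\nu(t)$ into $\int_0^\infty e^{-2\nu s}\sum_n 4\lambda_n K_0(2j_{\nu,n}\sinh s)\,t(1-t\lambda_n)e^{-t\lambda_n}\cdots{\rm d}s$, and I would try to show the inner sum is nonnegative by a probability tail argument. Normalizing the weights $\lambda_ne^{-t\lambda_n}$ defines a probability distribution on $\{\lambda_n\}$, and the required inequality says that a weight decreasing in $\lambda$ (coming from the $K_0$ factor) has nonnegative covariance with $1-t\lambda$, which is itself decreasing in $\lambda$. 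That is a Chebyshev-type (comonotone) inequality, positive by a tail-summation argument. The delicate point I expect is making the $K_0$-weight genuinely monotone in $n$ uniformly in $s$, together with controlling the $\nu$-dependence of the normalization.
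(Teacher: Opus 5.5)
There is a genuine gap. You reduce the theorem, correctly, to complete monotonicity of $L_\nu$, flag this as the main obstacle, and never prove it. That step carries all the content. Your reduction is in fact equivalent to the paper's. Write $R_\nu(z)=zI_{\nu+1}(z)/I_\nu(z)$ for your quotient (renamed to avoid a clash) and set $r_\nu(x)=1+\partial_\nu R_\nu(\sqrt x)=1-2x\sum_{n\ge1}\lambda_n'(\nu)/(x+\lambda_n(\nu))^2$. A direct computation then gives $L_\nu(x)=\sum_{n\ge1}\lambda_n'(\nu)(\lambda_n(\nu)-x)/(x+\lambda_n(\nu))^3=-\tfrac12 r_\nu'(x)$.

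The missing idea is to add the constant $1$, which disappears after differentiating in $x$. This turns the object into a manifestly completely monotone function. Your ODE step also needs repair:
\begin{itemize}
\item the Riccati equation is $R_\nu'=z-\frac{2\nu}{z}R_\nu-\frac{R_\nu^2}{z}$;
\item the integrating factor of the $\nu$-differentiated equation is $I_\nu^2(z)$;
\item the result is $\partial_\nu R_\nu(z)=-\frac{2}{I_\nu^2(z)}\int_0^zI_\nu(s)I_{\nu+1}(s)\,{\rm d}s$.
\end{itemize}
Your version is $\mathcal{O}(z^3)$ at the origin, whereas $\partial_\nu R_\nu(z)\sim -z^2/(2(\nu+1)^2)$. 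Now use $\frac{{\rm d}}{{\rm d}s}I_\nu^2(s)=2I_\nu(s)I_{\nu+1}(s)+\frac{2\nu}{s}I_\nu^2(s)$ and substitute $s=\rho\sqrt x$. For $\nu>0$ this gives
\[
r_\nu(x)=\frac{2\nu}{I_\nu^2(\sqrt x)}\int_0^{\sqrt x}\frac{I_\nu^2(s)}{s}\,{\rm d}s
=2\nu\int_0^1\rho^{2\nu-1}\prod_{n\ge1}\left(\frac{\lambda_n(\nu)+\rho^2x}{\lambda_n(\nu)+x}\right)^2{\rm d}\rho .
\]
This is a positive mixture of limits of products of completely monotone factors, so $r_\nu$ is completely monotone. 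Hence $2L_\nu=-r_\nu'$ is completely monotone, and your Bernstein/uniqueness argument finishes. The case $\nu=0$ then follows by continuity of $Q_\nu(t)$ in $\nu$. The paper phrases the same fact as $2F_\nu(t)=\omega_\nu((t,\infty))$, where $F_\nu=-\partial_\nu H_\nu$ and $\omega_\nu$ represents $r_\nu$; your $g_\nu$ equals $-tF_\nu'$. The sign $\partial_\nu R_\nu\le 0$ that you note only expresses monotonicity of the zeros and is far from enough.

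Your fallback cannot replace this step, for three reasons.
\begin{itemize}
\item As displayed, your integrand carries $(1-t\lambda_n)$, so it represents $t\,\partial_\nu Q_\nu(t)=-g_\nu(t)$. Proving it nonnegative would prove the reverse inequality.
\item With the correct sign you need $\sum_n w_nK_n(t\lambda_n-1)\ge0$, where $w_n\propto\lambda_ne^{-t\lambda_n}$ and $K_n=K_0(2j_{\nu,n}\sinh s)$ decreases in $n$. Since $K_n$ and $t\lambda_n-1$ are oppositely ordered, Chebyshev's inequality gives only the upper bound $\sum_nw_nK_n(t\lambda_n-1)\le\bigl(\sum_nw_nK_n\bigr)\bigl(\sum_nw_n(t\lambda_n-1)\bigr)$, which cannot prove nonnegativity.
\item The inner sum really is negative for some $s$. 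Take $t<1/\lambda_1(\nu)$ and let $s\to\infty$. Then $K_0(2j_{\nu,n}\sinh s)$ decays like $e^{-2j_{\nu,n}\sinh s}$, so the $n=1$ term dominates, and that term is negative.
\end{itemize}
Positivity appears only after integrating in $s$, which no termwise comonotonicity argument can capture.
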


\section{\bf Proof of the theorem}

We begin with estimates which will be used repeatedly to justify the term by term differentiation.

\begin{lemma}\label{lem:bounds}
If $M>0,$ then there exist the constants $c_M,C_M>0$ such that, for every
$\nu\in[0,M]$ and every $n\geq1$
\[c_M\cdot n\leq j_{\nu,n}\leq C_M\cdot n, \qquad 0<\partial j_{\nu,n}/\partial \nu\leq\frac{\pi}{2},\]
and consequently we have
\[c_M^2\cdot n^2\leq\lambda_n(\nu)\leq C_M^2\cdot n^2, \qquad 0<\partial\lambda_n(\nu)/\partial \nu\leq \pi\cdot C_M\cdot n.\]
\end{lemma}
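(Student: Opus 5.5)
We split the statement into three ingredients: the two-sided linear bound for $j_{\nu,n}$, the bound on $\partial j_{\nu,n}/\partial\nu$, and the consequences for $\lambda_n(\nu)$.

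\textbf{Bounds on the zeros.} We use two classical facts. First, $\nu\mapsto j_{\nu,n}$ is increasing on $[0,\infty)$. Second, the zeros interlace and are separated by roughly $\pi$. For the lower bound, monotonicity gives $j_{\nu,n}\geq j_{0,n}$ for $\nu\in[0,M]$. We then invoke the known inequality $j_{0,n}\geq (n-\tfrac14)\pi$ (or simply the spacing $j_{0,n+1}-j_{0,n}>\pi$ for $\nu\le 1/2$-type results). Either yields $j_{\nu,n}\geq c_M n$ with, say, $c_M=\min(j_{0,1},\tfrac{3\pi}{4})$.

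For the upper bound, monotonicity gives $j_{\nu,n}\leq j_{M,n}$. We then use a standard bound such as $j_{M,n}\leq \pi(n+M/2-1/4)$, valid for $M\geq 1/2$; for smaller $M$ we replace $M$ by $1/2$. Alternatively, McMahon's $j_{M,n}=\pi n+\mathcal O(1)$ shows that $j_{M,n}/n$ is bounded. Either route gives $j_{\nu,n}\leq C_M n$.

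\textbf{Derivative in $\nu$.} We start from Watson's formula
\[\frac{\partial j_{\nu,n}}{\partial\nu}=2j_{\nu,n}\int_0^\infty K_0(2j_{\nu,n}\sinh t)e^{-2\nu t}\,{\rm d}t,\qquad \nu>0,\]
extended to $\nu\geq0$ by continuity. Positivity is immediate because $K_0>0$.

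For the upper bound, we drop the factor $e^{-2\nu t}\leq1$ and use $\sinh t\geq t$ together with the monotone decrease of $K_0$. This gives
\[\frac{\partial j_{\nu,n}}{\partial\nu}\leq 2j\int_0^\infty K_0(2jt)\,{\rm d}t=\int_0^\infty K_0(u)\,{\rm d}u=\frac{\pi}{2}.\]

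This is the step needing the most care. We must justify Watson's formula on the closed range $\nu\in[0,M]$, including the endpoint $\nu=0$, where one-sided differentiability is obtained by continuity of the integral. We must also check that the evaluation $\int_0^\infty K_0=\pi/2$ is quoted correctly. If the endpoint issue causes trouble, we would instead use the known inequality $\partial j_{\nu,n}/\partial\nu<\pi/2$, or the bound $1<\partial j_{\nu,n}/\partial\nu$ for the lower estimate as stated in the literature (Watson, Elbert), and cite it.

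\textbf{Consequences for $\lambda_n$.} Squaring the first pair of bounds gives $c_M^2n^2\leq\lambda_n(\nu)\leq C_M^2n^2$. By the chain rule,
\[\frac{\partial\lambda_n(\nu)}{\partial\nu}=2j_{\nu,n}\,\frac{\partial j_{\nu,n}}{\partial\nu}.\]
This is positive, and the two previous bounds give $0<\partial\lambda_n(\nu)/\partial\nu\leq 2C_M n\cdot\pi/2=\pi C_M n$.
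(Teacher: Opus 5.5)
Your proof is correct and is essentially the paper's argument. You use monotonicity of $j_{\nu,n}$ in $\nu$ to reduce to the endpoint orders $0$ and $M$. You then apply Watson's formula with $\sinh t\ge t$, $e^{-2\nu t}\le 1$ and $\int_0^\infty K_0=\pi/2$, and finish with the chain rule. Using the explicit bounds $j_{0,n}>(n-\tfrac14)\pi$ and $j_{M,n}\le(n+\tfrac M2-\tfrac14)\pi$ instead of McMahon's expansion plus enlarged constants is only a cosmetic difference.

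One slip: your parenthetical claim $j_{0,n+1}-j_{0,n}>\pi$ is backwards. For $|\nu|<1/2$ consecutive zeros are spaced by less than $\pi$; for example, $j_{0,2}-j_{0,1}\approx 3.115$. Your main route never uses this claim, so the proof is unaffected.
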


\begin{proof}[\bf Proof of Lemma \ref{lem:bounds}]
Recall that for fixed order, McMahon's expansion in particular says that
\[j_{\nu,n}=\pi\left(n+\frac{\nu}{2}-\frac14\right)+O\left(n^{-1}\right)\]
as $n\to\infty.$ Moreover, it is well-known that the positive zeros $j_{\nu,n}$ are increasing functions of $\nu$. Hence, for $0\leq\nu\leq M$ we have $j_{0,n}\leq j_{\nu,n}\leq j_{M,n}.$ Now, applying the above fixed-order asymptotics at the two endpoint orders $0$ and $M$, and enlarging the constants to cover the finitely many remaining indices,
gives the first two-sided estimate.

For the derivative with respect to the order we use the classical formula
\[\frac{\partial j_{\nu,n}}{\partial\nu}=2j_{\nu,n}\int_0^\infty K_0\left(2j_{\nu,n}\sinh s\right)e^{-2\nu s}{\rm d}s,\]
see for example \cite[eq. 10.21.17]{nist}. Now, observe that the integrand is positive, and thus the derivative is also positive. Since $\sinh s\geq s$, $e^{-2\nu s}\leq1$,
and $K_0$ is positive and decreasing on $(0,\infty)$, we obtain that
\[\frac{\partial j_{\nu,n}}{\partial\nu}\leq 2j_{\nu,n}\int_0^\infty K_0(2j_{\nu,n}s){\rm d}s=\int_0^\infty K_0(u){\rm d}u=\frac{\pi}{2}\]
and this in turn implies that
\[\partial\lambda_n(\nu)/\partial\nu=2j_{\nu,n}\partial j_{\nu,n}/\partial\nu\leq\pi j_{\nu,n},\]
and this completes the proof of the last estimation.
\end{proof}

The preceding bounds justify all differentiations of the next series, which we shall need below.

\begin{lemma}\label{lem:differentiation}
Let $M>0$ be fixed. For every $t>0$, the series defining $H_\nu(t)$ and $Q_\nu(t)$ may be
differentiated term by term with respect to $\nu$, uniformly for $\nu\in[0,M]$. In particular, we have
\[-\frac{\partial H_\nu(t)}{\partial\nu}=t\sum_{n\geq1}\frac{\partial\lambda_n(\nu)}{\partial\nu}e^{-t\lambda_n(\nu)}.\]
Moreover, if
\[F_\nu(t):=-\frac{\partial H_\nu(t)}{\partial\nu},\]
then $F_\nu$ is differentiable for $t>0$ and
\[F_\nu'(t)=\frac{\partial Q_\nu(t)}{\partial\nu}=\sum_{n\geq1}\frac{\partial\lambda_n(\nu)}{\partial\nu}\left(1-t\lambda_n(\nu)\right)e^{-t\lambda_n(\nu)}.\]
Finally, for every $x>0$, the series
\[S_\nu(x)=\sum_{n\geq 1}\frac{\partial\lambda_n(\nu)/\partial\nu}{(x+\lambda_n(\nu))^2}\]
converges locally uniformly in $\nu\geq0$.
\end{lemma}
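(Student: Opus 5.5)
The plan is to apply the classical theorem on termwise differentiation of series of $C^1$ functions (Weierstrass $M$-test for the differentiated series plus convergence at one point) three times. Each majorant will come from Lemma \ref{lem:bounds}. Fix $M>0$ and work with $\nu\in[0,M]$.

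\textbf{The series $H_\nu(t)$.} Fix $t>0$. Each term $e^{-t\lambda_n(\nu)}$ is $C^1$ in $\nu$, because $j_{\nu,n}$ is differentiable in $\nu$ by the integral formula quoted in the proof of Lemma \ref{lem:bounds}. Its $\nu$-derivative is $-t\,\partial_\nu\lambda_n(\nu)\,e^{-t\lambda_n(\nu)}$. By Lemma \ref{lem:bounds} this is bounded in absolute value by
\[
t\pi C_M n\,e^{-tc_M^2n^2},
\]
uniformly in $\nu\in[0,M]$, and this majorant is summable. The undifferentiated series converges by the same bound. Hence termwise differentiation is legitimate, and it gives the stated formula for $-\partial_\nu H_\nu(t)=F_\nu(t)$.

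\textbf{The series $Q_\nu(t)$ and $F_\nu'(t)$.} For $t$ in a compact interval $[\delta,T]\subset(0,\infty)$, consider the series $\sum_n \partial_\nu\lambda_n(\nu)\,(1-t\lambda_n(\nu))\,e^{-t\lambda_n(\nu)}$. Its terms are bounded by
\[
\pi C_M n\,\bigl(1+TC_M^2n^2\bigr)\,e^{-\delta c_M^2n^2},
\]
which is summable and independent of $(\nu,t)$. Two consequences follow.

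First, the series for $F_\nu(t)$ may be differentiated termwise in $t$, since $\partial_t$ of $t\,\partial_\nu\lambda_n\,e^{-t\lambda_n}$ is exactly the term above. This gives $F_\nu'(t)$ as the displayed series.

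Second, $Q_\nu(t)=\sum_n\lambda_n(\nu)e^{-t\lambda_n(\nu)}$ has $\nu$-derivative termwise equal to the same expression, namely $\partial_\nu\lambda_n\,(1-t\lambda_n)e^{-t\lambda_n}$. The same majorant therefore justifies differentiating $Q_\nu(t)$ termwise in $\nu$. As a result the two limits coincide, $F_\nu'(t)=\partial_\nu Q_\nu(t)$.

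The step needing the most care is checking that each term $\lambda_n(\nu)$ is genuinely $C^1$ in $\nu$, so that the termwise theorem applies. This follows from continuity in $\nu$ of the integral representation of $\partial_\nu j_{\nu,n}$, by dominated convergence using $K_0(2j_{\nu,n}\sinh s)\le K_0(2j_{0,n}\sinh s)$, which is integrable. Strictly speaking, the interchange of $\partial_t$ and $\partial_\nu$ is not needed, since both derivatives are computed directly from convergent series.

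\textbf{The series $S_\nu(x)$.} Fix $x>0$ and let $\nu$ range over a compact set $[0,M]$. Using $x+\lambda_n(\nu)\ge c_M^2n^2$ together with Lemma \ref{lem:bounds}, each term is bounded by
\[
\frac{\pi C_M n}{c_M^4n^4},
\]
and this majorant is summable. The $M$-test then gives uniform convergence on $[0,M]$, hence locally uniform convergence in $\nu\ge0$, and even uniform convergence for $x\ge0$.
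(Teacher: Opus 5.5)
Your proof is correct and takes essentially the paper's approach. You feed the same majorants from Lemma~\ref{lem:bounds} into the Weierstrass $M$-test for $H_\nu$, for $Q_\nu$, for the $t$-derivative of $F_\nu$ on compact $t$-intervals, and for $S_\nu$, and you note that $\partial_t$ of the $F_\nu$ terms equals $\partial_\nu$ of the $Q_\nu$ terms. Your extra check that $\lambda_n(\nu)$ is $C^1$ in $\nu$, which the paper leaves implicit, is harmless but not strictly needed, since the standard termwise-differentiation theorem only requires each term to be differentiable.
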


\begin{proof}[\bf Proof of Lemma \ref{lem:differentiation}]
We fix $t>0$. By Lemma~\ref{lem:bounds}, uniformly for $\nu\in[0,M]$ we have
\[\left|\frac{\partial e^{-t\lambda_n(\nu)}}{\partial\nu}\right|
 =t\frac{\partial\lambda_n(\nu)}{\partial\nu}e^{-t\lambda_n(\nu)}\leq C t n e^{-c t n^2},\]
with positive constants $c,C$ depending only on $M$. Observe that the infinite series having the general term the expression on the
right-hand side of the above inequality is convergent, and therefore the Weierstrass test permits the term by term differentiation
of $H_\nu(t)$ with respect to $\nu$ and gives the stated formula for $F_\nu$. Similarly, we have
\[\left|\frac{\partial}{\partial\nu}\left(\lambda_n(\nu)e^{-t\lambda_n(\nu)}\right)\right|=\frac{\partial\lambda_n(\nu)}{\partial\nu}\left|1-t\lambda_n(\nu)\right|
 e^{-t\lambda_n(\nu)}\leq C n(1+t n^2)e^{-c t n^2},\]
and here the majorant on the right-hand side is also summable. Thus $Q_\nu(t)$ may also be differentiated term by term with respect to $\nu$.

Now, to differentiate $F_\nu$ with respect to $t$, we fix a compact interval
$[t_0,t_1]\subset(0,\infty)$ and for $t\in[t_0,t_1]$ we have
\[\left|\frac{{\rm d}}{{\rm d} t}\left(t\frac{\partial\lambda_n(\nu)}{\partial\nu}e^{-t\lambda_n(\nu)}\right) \right|\leq C n(1+t_1n^2)e^{-c t_0n^2},\]
uniformly for $\nu\in[0,M]$. Observe that the above majorant is also summable, and hence $F_\nu$ is differentiable and its derivative is the displayed series. The
same series was obtained for $\partial Q_\nu(t)/\partial\nu$, and thus $F_\nu'(t)=\partial Q_\nu(t)/\partial\nu.$

Finally, observe that
\[0\leq \frac{\partial\lambda_n(\nu)/\partial\nu}{(x+\lambda_n(\nu))^2}\leq\frac{C n}{c^2n^4}=\frac{C^\ast}{n^3},\]
uniformly on compact intervals of orders, which proves the local uniform convergence of $S_\nu(x)$.
\end{proof}

Now, we are going to use the above results in order to prove Theorem \ref{thm:mainQ}. For $x>0$ let $z=\sqrt{x}$ and consider the function
\[q_\nu(z)=\frac{I_{\nu+1}(z)}{I_\nu(z)}.\]
It is well-known that the infinite product representation of $I_\nu$, followed by logarithmic differentiation, implies that
\[\frac{q_\nu(z)}{2z}=\frac{I_{\nu+1}(\sqrt{x})}{2\sqrt{x}I_\nu(\sqrt{x})}=\sum_{n\geq1}\frac{1}{x+\lambda_n(\nu)}.\]
In view of Lemma \ref{lem:differentiation}, we can differentiate this series with respect to $\nu$ and thus we obtain that
\[S_\nu(x)=-\frac{\partial}{\partial \nu}\left(\frac{q_\nu(z)}{2z}\right).\]
Now, observe that if $p_\nu(z)=-\partial q_\nu(z)/\partial\nu,$ then we clearly have $S_\nu(x)={p_\nu(z)}/(2z).$ On the other hand, in view of the recurrence relations
\cite[eq. 10.29.2]{nist}
$$zI_\nu'(z)=\nu I_\nu(z)+zI_{\nu+1}(z)=zI_{\nu-1}(z)-\nu I_\nu(z),$$
the quotient $q_\nu$ satisfies the following Riccati differential equation
\[q_\nu'(z)=1-q_\nu^2(z)-\frac{2\nu+1}{z}q_\nu(z).\]
Differentiating with respect to $\nu$ both sides of the above equation, yields
\begin{equation}\label{eqpnu}p_\nu'(z)+\left(2q_\nu(z)+\frac{2\nu+1}{z}\right)p_\nu(z) =\frac{2q_\nu(z)}{z}.\end{equation}
In the theory of ordinary differential equations it is well-known that for a first order linear differential equation of the form
$$y'(z)+A(z)y(z)=B(z)$$ an integrating factor is $$\exp\left(\int A(z){\rm d}z\right).$$ Since
\[q_\nu(z)=\frac{I_\nu'(z)}{I_\nu(z)}-\frac{\nu}{z},\]
the integrating factor for the above first order linear differential equation of $p_{\nu}(z),$ that is \eqref{eqpnu}, is $\exp\left(2\log I_{\nu}(z)+\log z\right)=zI_\nu^2(z)$, and therefore multiplying both sides of the equation \eqref{eqpnu} by $zI_\nu^2(z),$ turns its left-hand side into an exact derivative
$$zI_\nu^2(z)p_{\nu}'(z)+\left(2zI_{\nu}(z)I_{\nu}'(z)+I_{\nu}^2(z)\right)p_{\nu}(z)=\frac{{\rm d}}{{\rm d} z}\left(zI_\nu^2(z)p_\nu(z)\right),$$
while the right-hand side becomes
$$zI_\nu^2(z)\frac{2q_\nu(z)}{z}=2I_\nu^2(z)q_\nu(z)=2I_\nu(z)I_{\nu+1}(z).$$
Thus, we obtain
\[\frac{{\rm d}}{{\rm d} z}\left(zI_\nu^2(z)p_\nu(z)\right)=2I_\nu(z)I_{\nu+1}(z).\]
Since for small $z$ we have the expansion
$$I_{\nu}(z)=\frac{(z/2)^{\nu}}{\Gamma(\nu+1)}\left(1+\frac{z^2}{4(\nu+1)}+\mathcal{O}(z^4)\right),$$
we clearly have
\[q_\nu(z)=\frac{z}{2(\nu+1)}+\mathcal{O}(z^3).\]
Thus for small $z$ we arrive at
\[p_\nu(z)=\frac{z}{2(\nu+1)^2}+\mathcal{O}(z^3),\]
and in particlular $p_\nu(z)=\mathcal{O}(z)$ as $z\to 0.$ Hence the integration
constant is zero and we arrive at
\[p_\nu(z)=\frac{2}{zI_\nu^2(z)}\int_0^z I_\nu(s)I_{\nu+1}(s){\rm d}s.\]
Now, we consider the function, defined by $r_\nu(x)=1-2xS_\nu(x).$ Since $x=z^2$ and $S_\nu(x)=p_\nu(z)/(2z)$ we clearly have that
\[r_\nu(x)=1-zp_\nu(z)=1-\frac{2}{I_\nu^2(z)}\int_0^z I_\nu(s)I_{\nu+1}(s){\rm d}s.\]
On the other hand for $\nu>0$ we have $I_\nu(0)=0$, and the recurrence relation
\[I_\nu'(s)=I_{\nu+1}(s)+\frac{\nu}{s}I_\nu(s)\]
implies that
\[\frac{{\rm d}}{{\rm d} s}I_\nu^2(s)=2I_\nu(s)I_{\nu+1}(s)+\frac{2\nu}{s}I_\nu^2(s).\]
Integration of both sides of the above equation from $0$ to $z$ therefore yields
\[r_\nu(x)=\frac{2\nu}{I_\nu^2(\sqrt{x})}\int_0^{\sqrt{x}}\frac{I_\nu^2(s)}{s}{\rm d}s\]
for every $\nu>0,$ and after the substitution $s=r\sqrt{x}$ this becomes
\[r_\nu(x)=2\nu\int_0^1 r^{2\nu-1}\left(r^{-\nu}\frac{I_\nu(r\sqrt{x})}{I_\nu(\sqrt{x})}\right)^2{\rm d}r.\]

Now, we are going to prove that $r_\nu$ is completely monotonic. For $0<r<1$ we define
\[P_{\nu,r}(x)=r^{-\nu}\frac{I_\nu(r\sqrt{x})}{I_\nu(\sqrt{x})},\]
which in view of the infinite product representation of the modified Bessel functions of the first kind can be written as
\[ P_{\nu,r}(x)=\prod_{n\geq1}\frac{\lambda_n(\nu)+r^2x}{\lambda_n(\nu)+x}.\]
On the other hand, for each $n$ the function
\[x\mapsto\frac{\lambda_n(\nu)+r^2x}{\lambda_n(\nu)+x} =r^2+(1-r^2)\frac{\lambda_n(\nu)}{\lambda_n(\nu)+x}\]
is completely monotonic on $(0,\infty).$ Since, an infinite product of the form $\prod_{n\geq1}f_n(x)$ is defined as the pointwise limit of its partial products $\prod_{n=1}^Nf_n(x)$ and if each partial product is completely monotonic, then the infinite product is also completely monotonic. In other words, the set of completely monotonic functions is closed under pointwise convergence. Thus $P_{\nu,r}$ is completely monotonic\footnote{Alternatively, we can see easily that the infinite product representation implies
\[-\frac{{\rm d}}{{\rm d}x}\log P_{\nu,r}(x)=\sum_{n\geq1}\left(\frac1{x+\lambda_n(\nu)}-\frac1{x+\lambda_n(\nu)/r^2}\right).\]
Since every summand is completely monotonic, we obtain that $-\log P_{\nu,r}$ is a Bernstein function vanishing at the origin, so $P_{\nu,r}$ itself is the Laplace transform of an infinitely divisible probability distribution. In particular, $P_{\nu,r}$ is completely monotonic.} and $P_{\nu,r}(0)=1,$ and consequently $P_{\nu,r}^2$ is also completely monotonic. This shows that $r_\nu$ is completely monotonic.  Moreover, we have
\[r_\nu(0^+)=2\nu\int_0^1r^{2\nu-1}{\rm d}r=1\]
and the Bernstein-Widder theorem therefore implies that there exists a unique probability measure $\omega_\nu$ on
$[0,\infty)$ such that
\[r_\nu(x)=\int_0^\infty e^{-xs}{\rm d}\omega_\nu(s).\]

Now, we return to the function $F_\nu,$ which we studied in Lemma \ref{lem:differentiation}, that is
\[F_\nu(t)=t\sum_{n\geq 1}\frac{\partial\lambda_n(\nu)}{\partial\nu}e^{-t\lambda_n(\nu)}.\]
The function $t\mapsto F_\nu(t)$ is nonnegative and continuous for $t>0$, and in addition in view of Lemma \ref{lem:bounds} it is locally
integrable at the origin, that is
\[F_\nu(t)\leq Ct\sum_{n\geq1} n e^{-ctn^2}=\mathcal{O}(1)\]
as $t\to0.$ Since every summand is nonnegative, Tonelli's theorem implies that for $x>0$ the Laplace transform of $F_\nu(x)$, denoted by $\Lap[F_\nu](x)$, is exactly
\[\int_0^\infty e^{-xt}F_\nu(t){\rm d}t=\sum_{n\geq1}\frac{\partial\lambda_n(\nu)}{\partial\nu}\int_0^\infty t e^{-(x+\lambda_n(\nu))t}{\rm d}t
=\sum_{n\geq 1}\frac{\partial\lambda_n(\nu)/\partial\nu}{(x+\lambda_n(\nu))^2}\]
and therefore $2\Lap[F_\nu](x)=2S_\nu(x)=(1-r_\nu(x))/x.$ On the other hand, by using the probability measure $\omega_\nu$, we obtain that the expression $(1-r_\nu(x))/x$ can be rewritten as\footnote{Here $\omega_{\nu}$ is the probability measure given by the Bernstein-Widder theorem and $\omega_\nu((t,\infty))$ means the measure that $\omega_\nu$ assigns to the interval $(t,\infty)$. So, if $X_\nu$ is a random variable with distribution $\omega_\nu,$ then $\omega_\nu((t,\infty))=P(X_\nu>t).$ It is simply the tail function or survival function of the probability measure $\omega_\nu$.}
$$\int_0^\infty\frac{1-e^{-xs}}{x}{\rm d}\omega_\nu(s)=\int_0^\infty\int_0^s e^{-xt}{\rm d}t{\rm d}\omega_\nu(s)=\int_0^\infty e^{-xt}\omega_\nu((t,\infty)){\rm d}t,$$
where Tonelli's theorem is applicable because the integrand is nonnegative. Thus, we arrive at
\[\Lap[2F_\nu](x)=\Lap\left[t\mapsto \omega_\nu((t,\infty))\right](x).\]
Observe that both functions are locally integrable and their Laplace transforms are finite
for every $x>0$. Consequently, by the uniqueness of the Laplace transform, we obtain that $2F_\nu(t)=\omega_\nu((t,\infty))$ for almost every $t>0$. The function on the right-hand side is decreasing and since $F_\nu$ is continuous, it follows that $F_\nu$ is decreasing on $(0,\infty)$. Indeed, if $0<s<t$ and $F_\nu(s)<F_\nu(t)$, continuity gives small ordered neighborhoods of $s$ and $t$ on which this strict inequality holds and by choosing points from the full measure set on which the preceding identity holds contradicts the monotonicity of the tail $\omega_\nu((t,\infty))$. More precisely, by continuity, the strict inequality $F_\nu(s)<F_\nu(t)$ persists on sufficiently small neighborhoods \(U_s\ni s\) and \(U_t\ni t\), with every point of \(U_s\) smaller than every point of \(U_t\). Since the above equality holds on a set of full measure, we may choose \(u\in U_s\) and \(v\in U_t\) where it holds. Then \(u<v\) but
\[
\omega_\nu((u,\infty))=2F_\nu(u)<2F_\nu(v)=\omega_\nu((v,\infty)),
\]
contradicting the monotonicity of the tail function.

Now, recall that in view of Lemma \ref{lem:differentiation} we have $\partial Q_\nu(t)/\partial\nu=F_\nu'(t)$ and since $F_\nu$ is differentiable and decreasing, we arrive at
$\partial Q_\nu(t)/\partial\nu\leq0,$ $\nu>0,$ $t>0.$ Thus, we have that $Q_\mu(t)\geq Q_\nu(t)$ for all $\nu>\mu>0.$

It remains only to include the endpoint $\mu=0$. For this we fix $t>0$ and $M>0$. For $0\leq\rho\leq M$, Lemma \ref{lem:bounds} implies that
\[j_{\rho,n}^2e^{-t j_{\rho,n}^2}\leq C_M^2n^2e^{-t c_M^2n^2},\]
and the right-hand side of this inequality is summable. Since $j_{\rho,n}\to j_{0,n}$ as
$\rho\to0$, the dominated convergence gives $Q_\rho(t)\to Q_0(t).$ Since $Q_\rho(t)\ge Q_\nu(t)$ for every $\nu>\rho>0,$ letting
$\rho\to0$ implies $Q_0(t)\ge Q_\nu(t)$ and hence $Q_\mu(t)\ge Q_\nu(t)$ for all $\nu>\mu\geq0.$

Finally, combining Theorem \ref{thm:mainQ} with the equivalence established in Section 1 proves the Ismail--Miller conjecture.

\section{\bf Some remarks} We would like to point out the following remarks:

{\bf A.} It is worth to mention here that the identity for $r_\nu$ can be obtained directly\footnote{The idea comes from the approach used in the second part of the proof of \cite[Theorem 2.1]{baricz}, where a similar formula has been obtained for the derivative with respect to the order of the logarithmic derivative of the Bessel function of the first kind.}, without using the Riccati differential equation. Indeed, the logarithmic derivative of both sides of the infinite product representation can be written as
\[\frac{zI_\nu'(z)}{I_\nu(z)}=\nu+2z^2\sum_{n\geq1}\frac{1}{z^2+\lambda_n(\nu)}.\]
By Lemma \ref{lem:differentiation}, the differentiation with respect to $\nu$ is
legitimate, and therefore we obtain that
\[\frac{\partial}{\partial\nu}\left(\frac{zI_\nu'(z)}{I_\nu(z)}\right)=r_\nu(z^2).\]
Now, let us consider the expressions
\[u(r)=I_\nu(zr),\qquad v(r)=\partial I_\nu(zr)/\partial\nu.\]
The modified Bessel differential equation and the derivative with respect to the order of both sides imply that
\[(ru'(r))'-\left(\frac{\nu^2}{r}+z^2r\right)u(r)=0\]
and
\[(rv'(r))'-\left(\frac{\nu^2}{r}+z^2r\right)v(r)=\frac{2\nu}{r}u(r).\]
Multiplying both sides of the second equation by $u(r)$, both sides of the first equation by $v(r)$, and subtracting
we arrive at
\[\frac{{\rm d}}{{\rm d}r}\left[r(u(r)v'(r)-v(r)u'(r))\right]=\frac{2\nu}{r}u^2(r).\]
For $\nu>0$ the behavior at the origin implies that $r(u(r)v'(r)-v(r)u'(r))\to0$ as $r\to0$. Indeed, for $\nu>0$ as $r\to0$ we have
\[u(r)=I_\nu(zr)=\frac{(zr/2)^\nu}{\Gamma(\nu+1)}\left(1+\mathcal{O}(r^2)\right),\]
which implies that
\[\frac{v(r)}{u(r)}=\frac{\partial}{\partial\nu}\log I_\nu(zr)=\log r+\mathcal{O}(1),\]
and therefore
\[\left(\frac{v(r)}{u(r)}\right)'=\frac1r+O(r).\]
This in turn implies that as $r\to0$ we have
\[r\left(u(r)v'(r)-v(r)u'(r)\right)=ru^2(r)\left(\frac{v(r)}{u(r)}\right)'=\mathcal{O}(r^{2\nu}).\]
Now, integrating from $0$ to $1$ and dividing by $I_\nu^2(z)$ yields
\[\frac{\partial}{\partial\nu}\left(\frac{zI_\nu'(z)}{I_\nu(z)}\right)=\frac{2\nu}{I_\nu^2(z)} \int_0^1\frac{I_\nu^2(zr)}{r}{\rm d}r,\]
which is precisely the identity used in the proof.

{\bf B.} The positive integral representation for $r_\nu$ was used only for
$\nu>0$. Its limit as $\nu\to0$ is nevertheless explicit
\[r_0(x)=\frac{1}{I_0^2(\sqrt{x})}.\]
Indeed, when $\nu=0$ the relation $I_0'(s)=I_1(s)$, and the Riccati differential equation imply that
\[\int_0^zI_0(s)I_1(s){\rm d}s=\frac12\left(I_0^2(z)-1\right).\]
The complete monotonicity of $r_0$ also follows directly from the infinite product representation of $I_0$.

{\bf C.} We would like point out that in fact the function
\[S_\nu(x)=\sum_{n\geq 1}\frac{\partial\lambda_n(\nu)/\partial\nu}{(x+\lambda_n(\nu))^2}\]
is a generalized Stieltjes transform of order $2,$ that is
\[S_\nu(x)=\int_0^\infty\frac{{\rm d}\eta_\nu(s)}{(x+s)^2}, \qquad \eta_\nu=\sum_{n\geq1}\frac{\partial\lambda_n(\nu)}{\partial\nu} \delta_{\lambda_n(\nu)}.\]
The measure $\eta_\nu$ is positive because the zeros $j_{\nu,n}$ increase
with the order. This representation is closely related in spirit to the
Stieltjes-transform methods used by Mourad E.H. Ismail and collaborators. The above proof
above, however, needs the stronger positive representation of $r_\nu=1-2xS_\nu$, which is supplied by the normalized quotient
$P_{\nu,r}$ and Bernstein's theorem.

\end{document}